\makeatletter\@namedef{subjclassname@2010}{\textup{2010} Mathematics Subject Classification}\makeatother
\newtheorem{thm}{Theorem}[section]
\theoremstyle{definition}
\newtheorem{defn}[thm]{Definition}
\newtheorem{rem}[thm]{Remark}
\newcommand{\N}{\mathbb{N}}
\newcommand{\R}{\mathbb{R}}
\newcommand{\ra}{\rightarrow}
\begin{document}
\title[A class of continua$\ldots$]{A class of continua that are not attractors of any IFS}
\author{Marcin Kulczycki}
\address[M. Kulczycki]{Faculty of Mathematics and Computer Science\\ Jagiellonian University\\
ul. \L o\-jasiewicza 6, 30-348 Krak\'ow, Poland}
\email{Marcin.Kulczycki@im.uj.edu.pl}
\author{Magdalena Nowak}
\address[M. Nowak]{Faculty of Mathematics and Computer Science\\ Jagiellonian University\\
ul.\L o\-jasiewicza 6, 30-348 Krak\'ow, Poland \\{\rm and}\\Institute of Mathematics\\Jan Kochanowski University\\ul. \'Swi\k{e}tokrzyska 15, 25-406 Kielce, Poland}
\email{magdalena.nowak805@gmail.com}

\begin{abstract}
This paper presents a sufficient condition for a continuum in $\R^n$ to be embeddable in $\R^n$ in such a way that its image is not an attractor of any iterated function system. An example of a continuum in $\R^2$ that is not an attractor of any weakly contracting iterated function system is also given.
\end{abstract}
\subjclass[2000]{28A80 (primary), 54F15, 37B25, 54H20 (secondary)}
\keywords{Fractal, continuum, iterated function system, attractor}
\maketitle
\section{Introduction}

The notion of an iterated function system (abbrev. IFS), introduced by John Hutchinson in 1981 \cite{Hutch}, has proven to be a fertile field of research as well as a versatile and useful tool in lossy data compression (especially where image data is concerned). This paper is a study in one specific aspect of the theory - the possibility of encoding a particular set as an attractor of an IFS. We now recall some basic terminology.

Let $(X,d)$ be a complete metric space. A map $f:X\ra X$ is called a {\em contraction} if there exists a constant $\lambda\in(0,1)$ such that for every $x,y\in X$ we have $d(f(x),f(y))\leq\lambda d(x,y)$. A map $f:X\ra X$ is called a {\em weak contraction} if for every $x,y\in X$, $x\neq y$ we have $d(f(x),f(y))< d(x,y)$. A family $F=\{f_1,\dots,f_n\}$ of (weak) contractions $f_i:X\ra X$ is called a {\em (weakly contracting) iterated function system} (see \cite{B}). Given a compact $B\subset X$ define
$$F(B)=\bigcup_{i=1}^n f_i(B).$$
This transformation, acting on the space of nonempty compact subsets of $X$ with the Hausdorff metric, is called the {\em Barnsley-Hutchinson operator}.

It is shown in \cite{Hutch} that every IFS has a unique attractor. Analogous fact is also true for any weakly contracting IFS. M. Hata proved in \cite{Hata} that if the attractor of some IFS is connected, then it is also locally connected. M. J. Sanders showed in \cite{MS} that every arc of finite length is an attractor for some IFS. Additionally, he has proven that if $a$ is an endpoint of some arc $A\subset\R^n$ which has the properties:
\begin{enumerate}
\item for all $x, y \in A\setminus\{a\}$ the length of the subarc of $A$ with endpoints $x$ and $y$ is finite,
\item for every $x\in A\setminus\{a\}$ the length of the subarc of $A$ with endpoints $x$ and $a$ is infinite,
\end{enumerate}
then $A$ is not an attractor of any IFS acting on $\R^n$. One example of such arc is the harmonic spiral \cite{MS2}. The example of M. Kwieci\'nski from \cite{Kw} may also be easily modified to satisfy these assumptions.

It is elementary to check that every continuum in $\R$ is an attractor of some IFS. Moreover, any embedding of such continuum in $\R$ is still an attractor of some IFS. In dimesion two and higher, however, the situation becomes more complex. Our results provide a sufficient condition for a continuum to be embeddable in $\R^n$ so that its image is not an attractor of any IFS.

\section{Main Results}

\begin{defn}
Let $(X,d)$ be a metric space, $A\subset X$, $x,y\in A$, and $\varepsilon>0$. Consider all the sequences $x_1,\ldots,x_k$ such that $k\in\N$, $x_1=x$, $x_k=y$, $x_i\in A$, $d(x_i,x_{i+1})<\varepsilon$. Denote by $\tilde{d}(x,y,A,\varepsilon)$ the infimum of the sums $\sum_{i=1}^{k-1}d(x_i,x_{i+1})$ for these sequences. Define $\tilde{d}(x,y,A)=\lim_{\varepsilon\searrow 0}\tilde{d}(x,y,A,\varepsilon)$. This limit may be infinite.

It is elementary that if $A\subset B$ then $\tilde{d}(x,y,A)\geq\tilde{d}(x,y,B)$.
\end{defn}

\begin{thm}\label{main}
Let $n\geq 2$. Let $C\subset\R^n$ be a continuum. Assume that there exists an $(n-1)$-dimensional hyperspace $B\subset\R^n$ such that $B\cap C=\{p\}$ and $C\setminus\{p\}$ is connected. Assume additionally that for every $x,y\in C\setminus\{p\}$ there exists $U_{xy}$ which is a neighbourhood of $p$ such that $\tilde{d}(x,y,C\setminus U_{xy})<+\infty$. Then there exists an embedding $h:C\ra\R^n$ such that $h(C)$ is not an attractor of any IFS.
\end{thm}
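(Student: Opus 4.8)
The plan is to produce the embedding $h$ so that the point $q:=h(p)$ acquires, with respect to the \emph{intrinsic distance} $\tilde{d}(\cdot,\cdot,h(C))$, exactly the two features that drive Sanders' argument for arcs: (I) $\tilde{d}(x,q,h(C))=+\infty$ for every $x\in h(C)\setminus\{q\}$, while (F) for every $x,y\in h(C)\setminus\{q\}$ some neighbourhood $U$ of $q$ satisfies $\tilde{d}(x,y,h(C)\setminus U)<+\infty$. Property (F) is nothing but the hypothesis on $C$ transported through $h$, so the whole content of the construction is to \emph{manufacture} the blow-up (I), which the bare hypotheses on $C$ need not provide. Once (I) and (F) are in force, I would show directly that $h(C)$ cannot be an attractor.

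To build $h$ I would first normalise coordinates so that $B=\{x_n=0\}$ and $p=0$; since $C\setminus\{p\}$ is connected and misses $B$, it lies in a single open half-space, say $\{x_n>0\}$, and the height $x_n$ is a continuous function on $C$ that vanishes only at $p$. I would then let $h$ be the restriction to $C$ of a global \emph{twist} $\Phi$ of $\R^n$ that rotates by an angle $g(r)$, $r=|x|$, with $g(r)\nearrow+\infty$ as $r\searrow 0$ (for instance $g(r)=1/r$), the rotation being chosen so that its instantaneous tangential speed along any curve running into $0$ is comparable to $r\,|g'(r)|$. Away from $p$ the map $\Phi$ is bi-Lipschitz, so finite intrinsic distances are preserved and (F) passes from $C$ to $h(C)$. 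Near $p$ the winding forces every approach to spiral: the length picked up between radii is of order $\int r\,|g'(r)|\,dr=\int dr/r=+\infty$, which should give (I). The delicate point is that this estimate must hold \emph{uniformly over all directions of approach}, and this is exactly where the hyperplane $B$ is used: because every point of $C\setminus\{p\}$ has $x_n>0$, no approach to $p$ inside $C$ can hide in a direction left fixed by the twist, so the tangential-speed lower bound $r\,|g'(r)|$ is available for all of them.

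For the non-attractor step, suppose $h(C)=\bigcup_{i=1}^m f_i(h(C))$ with each $f_i$ a $\lambda_i$-contraction. The key elementary fact is the scaling inequality $\tilde{d}(f_i(a),f_i(b),f_i(h(C)))\le\lambda_i\,\tilde{d}(a,b,h(C))$, obtained by pushing forward chains, combined with the monotonicity remark $\tilde{d}(\cdot,\cdot,A)\ge\tilde{d}(\cdot,\cdot,B)$ for $A\subset B$ recorded in the Definition. Using these I would first locate $q$: every piece $f_i(h(C))$ has all its points at finite mutual intrinsic distance except possibly $f_i(q)$, so the unique blow-up point $q$ of $h(C)$ can lie in $f_i(h(C))$ only as $q=f_i(q)$; hence $q$ is a fixed point of some $f_i$. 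Writing $x_k=f_i^k(x_0)$ for a suitable $x_0\neq q$ with $f_i(x_0)\neq q$, the scaling inequality gives $\tilde{d}(x_k,x_{k+1},h(C))\le\lambda_i^k\,\tilde{d}(x_0,x_1,h(C))$; since $x_0,x_1\neq q$ the first term is finite by (F), so $\sum_k\tilde{d}(x_k,x_{k+1},h(C))<+\infty$. As $x_k\ra q$, concatenating near-optimal chains shows $\tilde{d}(x_0,q,h(C))$ is bounded by this sum, contradicting (I).

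I expect the construction of $\Phi$ to be the main obstacle, specifically guaranteeing that \emph{every} chain from a given point to $p$ --- not merely every smooth approach curve --- is forced to have length tending to $+\infty$ as the mesh $\varepsilon\searrow 0$, and that the directional uniformity survives for all $n\ge2$ (for odd $n$ a single linear twist leaves an axis fixed, so the half-space condition must be exploited more carefully to wind the normal direction as well). The remaining technical points --- the push-forward estimate for $\tilde{d}$, the interchange of the limits $\varepsilon\searrow0$ and $k\ra\infty$ in the concatenation, and the verification that a non-degenerate piece with $f_i(q)=q$ and a usable $x_0$ exist --- are routine once the two intrinsic-metric properties are established.
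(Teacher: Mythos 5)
Your overall strategy is the right one, and the second half of your plan (the non-attractor argument from the two intrinsic-metric properties (I) and (F), via the scaling inequality for $\tilde{d}$ under a $\lambda$-contraction and the telescoping orbit sum) is essentially identical to the paper's. The genuine gap is in the first half: the embedding. Your twist $\Phi(r,\theta)=(r,\theta+g(r))$ is a different construction from the paper's, and you yourself flag its two weak points without resolving them. First, for $n\ge 3$ a rotation-by-$g(r)$ has a nontrivial fixed subspace (an axis for odd $n$), and ``exploiting the half-space condition more carefully'' is exactly the part of the construction that is missing; as written there is no embedding for general $n$. Second, even for $n=2$ the claim that \emph{every} $\varepsilon$-chain to $q$ accumulates unbounded length is only asserted. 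It is true but delicate: since angles are only defined mod $2\pi$, a chain can hop radially between successive turns of the spiralled set once the gaps between turns (of order $r^2$) drop below $\varepsilon$, and the lower bound must come from counting the forced angular catch-up in the annulus $r\gtrsim\sqrt{\varepsilon}$, using that the angular extent of $C$ near $p$ is at most $\pi$ (this is where $B$ enters). The paper sidesteps both problems with a two-step map: $h_1(x)=(x_1,\frac{x_1}{100}x_2,\dots,\frac{x_1}{100}x_n)$ first squeezes $C$ into a cone of transverse radius $x_1/100$ about the $x_1$-axis, and then $h_2$ adds $\sqrt{x_1}\sin x_1^{-1}$ to the second coordinate only. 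This works uniformly in all dimensions, and property (I) becomes easy because $h(C)$ sits inside $h([0,1]\times[-1,1]^{n-1})$, a tube of width $o(\sqrt{x_1})$ around a graph of infinite length, for which $\tilde{d}(x,h(p),\cdot)=+\infty$ is immediate by monotonicity of $\tilde{d}$ in the ambient set. If you keep the twist, you must supply the chain estimate and the $n\ge3$ construction; if you adopt a preliminary squeeze like $h_1$, both difficulties disappear.

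There is also a smaller but real gap at the end. You reduce to ``$q$ is a fixed point of some $f_i$'' and then ask for ``a usable $x_0\neq q$ with $f_i(x_0)\neq q$,'' calling its existence a routine verification. It is not: $f_i$ may be the constant map with value $q$ (a perfectly legitimate contraction of $h(C)$ into itself), in which case no such $x_0$ exists and your orbit argument produces no contradiction. The correct conclusion of the metric argument is only that every $f_i$ whose image contains $q$ is \emph{constant} equal to $q$; one then needs a final, separate step: $F(h(C))$ would be the union of $\{q\}$ with finitely many compact sets not containing $q$, so $\{q\}$ would be relatively open and closed in $F(h(C))$, and hence $F(h(C))\neq h(C)$ because $h(C)$ is a nondegenerate continuum. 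This connectedness step is how the paper closes the proof, and it must be added to yours.
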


\begin{proof}
By applying an affine transformation we may assume without loss of generality that $B=\{0\}\times\R^{n-1}$, $p=(0,\ldots,0)$, and $C\subset[0,1]\times[-1,1]^{n-1}$. Next define $h_1,h_2:\R^n\ra\R^n$ as
$$
h_1(x_1,\ldots,x_n)=(x_1,\frac{x_1}{100}x_2,\ldots,\frac{x_1}{100}x_n)
$$
$$
h_2(x_1,\ldots,x_n)=(x_1,\sqrt{x_1}\sin x_1^{-1}+x_2,x_3,\ldots,x_n)
$$
Then define the embedding $h:C\ra\R^n$ as the composition $h_2\circ h_1$.

Speaking colloquially, $h_1$ transforms $C$ into a sharp needle, while $h_2$ bends that needle to fit into a thickened-up graph of the function $\sqrt{x}\sin x^{-1}$. As a result of the second transformation the needle becomes, speaking imprecisely, of infinite length. Figure \ref{needle} illustrates the process for $n=2$.

\begin{figure}\centering\includegraphics[width=300px]{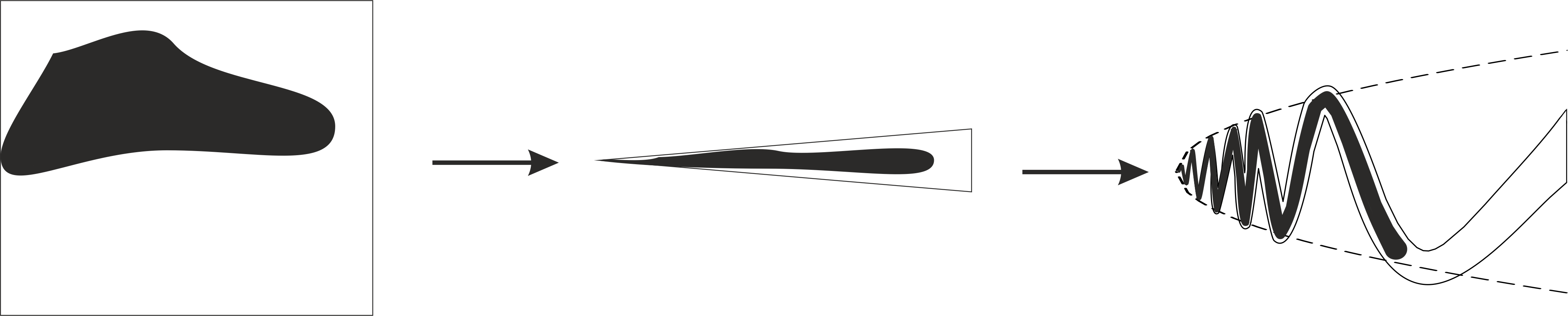}\caption{The map $h$ for $n=2$}\label{needle}\end{figure}

The map $h_1$ does not increase distance, and therefore for every $x,y\in h_1(C\setminus\{p\})$ there exists $U^1_{xy}$ which is a neighbourhood of $h_1(p)$ such that $\tilde{d}(x,y,h_1(C)\setminus U^1_{xy})<+\infty$.

Note that, outside of any neighbourhood $U$ of $h_1(p)$, the expansivity constant of $h_2|_{h_1(C)\setminus U}$ is bounded from above. This implies that for every $x,y\in h(C\setminus\{p\})$ there exists $U^2_{xy}$ which is a neighbourhood of $h(p)$ such that $\tilde{d}(x,y,h(C)\setminus U^2_{xy})<+\infty$.

Consider now a contraction $f:h(C)\ra h(C)$ with a Lipschitz constant $\lambda<1$. We would like to prove that if $h(p)\in f(h(C))$ then $f(h(C))=\{h(p)\}$. To this end, conjecture that $f$ is not constant and $h(p)\in f(h(C))$.

Assume first that $f(h(p))=h(p)$. Fix any $x\in h(C)$ such that $f(x)\neq h(p)$. Note that the sequence $x,f(x),f^2(x),\ldots$ is convergent to $h(p)$. Also note that by the assumptions $\tilde{d}(x,f(x),h(C))$ is finite and additionally $\tilde{d}(f^i(x),f^{i+1}(x),h(C))\leq\lambda^i\tilde{d}(x,f(x),h(C))$. But this would imply that $\tilde{d}(x,h(p),h(C))$ is also finite, while it is not, since it can be seen from the definition of $h_2$ that $\tilde{d}(x,h(p),h([0,1]\times[-1,1]^{n-1})$ is infinite.

If, on the other hand, $f(h(p))\neq h(p)$ then there exist $x\in h(C)$ such that $f(x)=h(p)$ and $y\in h(C)\setminus\{h(p)\}$ such that $f(y)\neq h(p)$. Then $\tilde{d}(x,y,h(C))$ would be finite and $\tilde{d}(f(x),f(y),h(C))$ would be infinite, which contradicts the contractivity of $f$, completing the proof that if $f$ takes value $h(p)$ on at least one argument then it has to be constant.

Consequently, if $F$ is the Barnsley-Hutchinson operator for some IFS and $F(h(C))\subset h(C)$, then $F(h(C))$ may comprise of $\{h(p)\}$ and possibly also finitely many other closed sets not containing $h(p)$. But then $F(h(C))\neq h(C)$, proving that $h(C)$ is not an attractor of $F$.
\end{proof}

\begin{rem}
The assumptions of Theorem \ref{main} are technical and may seem very restricitve. Its assertion, however, is true not only for the continua that satisfy them directly, but also for the continua that may be embedded in $\R^n$ in such a way that their image satisfies them. This significantly widens the class of sets the theorem is useful for. For example, if any two points in the continuum $A\subset\R^n$ can be connected in $A$ by a path of finite length, then it can be easily seen that the wedge sum of $A$ and $[0,1]$ may be embedded in $\R^{n+1}$ so that the assumptions of Theorem \ref{main} are satisfied.
\end{rem}

After the result of Hata \cite{Hata} it has been an open problem whether every locally connected continuum in $\R^n$ is an attractor of some IFS. The example of Kwieci\'nski \cite{Kw} provided a negative answer, but the same question for weakly contracting IFS's remains, to our best knowledge, open. We shall now give an example of a subcontinuum of $\R^2$ that is not an attractor of any weakly contracting IFS.

\begin{defn}
We now switch to a polar coordinate system $(r,\theta)$ on $\R^2$. Put $p_0=(0,0)$ and $p_n=(2^{-n},2^{-n})$ for $n\geq 1$. For any $n\geq 1$ choose a broken line segment $l_n$ without self-intersections, consisting of finitely many intervals, that starts at $p_0$, ends at $p_n$, has the total length of $2^n$, and is contained in $[[0,2^{-n})\times(2^{-n}-2^{-n-2},2^{-n}+2^{-n-2})]\cup\{p_n\}$. Define $P=\bigcup_{i=1}^\infty l_i$.
\end{defn}

\begin{figure}\centering\includegraphics[width=300px]{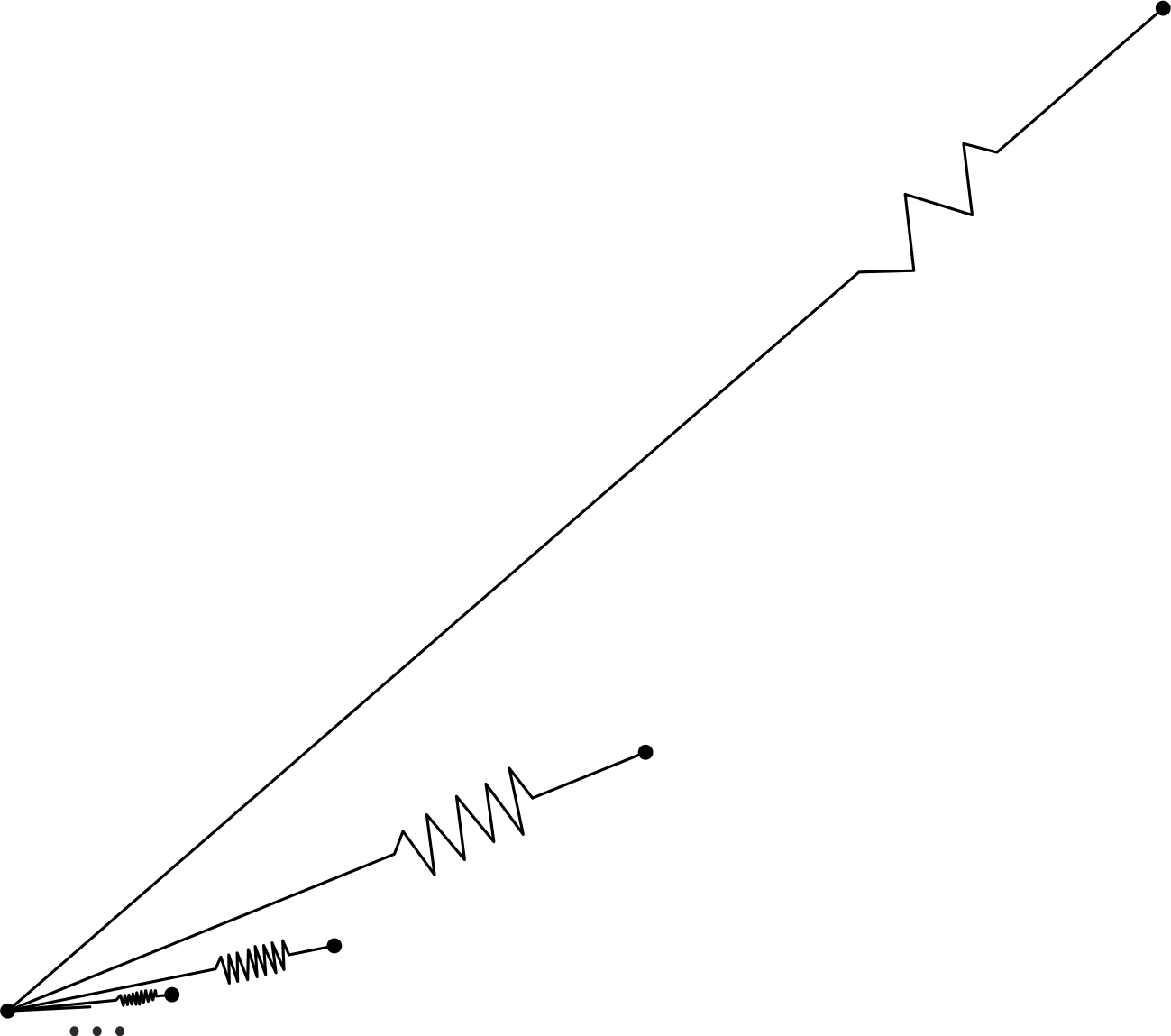}\caption{The space P}\end{figure}

\begin{thm}
The space $P$ is not an attractor of any weakly contracting IFS.
\end{thm}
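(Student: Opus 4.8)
The plan is to exploit the fact that, measured from the branch point $p_0$, the free endpoints $p_n$ are Euclidean-close but intrinsically far, at reciprocal rates. Concretely I would work with two functions on $P$: the Euclidean distance $\rho(x)=|x-p_0|$ and the intrinsic distance $\delta(x)=\tilde{d}(x,p_0,P)$. First I would record three elementary facts. (a) For every $x$ on the arc $l_m$ one has $\rho(x)\le 2^{-m}$ (by the confinement of $l_m$ to radius $<2^{-m}$) and $\delta(x)\le 2^m$ (by travelling along $l_m$). (b) Since each $l_n$ is a simple polygonal arc and distinct arcs are disjoint off $p_0$ and angularly separated, the $\varepsilon$-chain infimum cannot shortcut $l_n$: for $\varepsilon$ below the (positive) minimal distance between non-adjacent edges of $l_n$ no admissible chain can jump across the arc, so $\delta(p_n)=\tilde{d}(p_n,p_0,P)=2^n$. (c) A weak contraction $f\colon P\to P$ is non-expanding, hence $|f(x)-f(y)|<|x-y|$ for $x\neq y$ and, by mapping $\varepsilon$-chains to $\varepsilon$-chains, $\tilde{d}(f(x),f(y),P)\le\tilde{d}(x,y,P)$. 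Thus at $p_n$ the two quantities scale oppositely, $\rho(p_n)=2^{-n}\searrow 0$ while $\delta(p_n)=2^n\nearrow\infty$, and both are respected by each $f_i$.

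Assume for contradiction that $F=\{f_1,\dots,f_M\}$ is a weakly contracting IFS with $P=\bigcup_{i=1}^M f_i(P)$. The core claim is that each image $f_i(P)$ contains only finitely many of the endpoints $p_n$; since there are finitely many maps but infinitely many endpoints, this contradicts $\{p_n\}\subset P=\bigcup_i f_i(P)$. To prove the claim, fix $i$, write $q_i=f_i(p_0)$, $c_i=|q_i-p_0|$, $D_i=\tilde{d}(q_i,p_0,P)$, and suppose $p_n=f_i(w)$ with $n$ large (so that $p_n\neq q_i$, forcing $w\neq p_0$). Intrinsic non-expansion together with the triangle inequality gives $\delta(w)\ge\tilde{d}(p_n,q_i,P)\ge\delta(p_n)-D_i=2^n-D_i$; since $w$ lies on some $l_m$ with $\delta(w)\le 2^m$, this forces $m\ge n-1$ for large $n$, whence $\rho(w)\le 2^{-m}\le 2\cdot 2^{-n}$. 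On the other hand the strict Euclidean contraction gives $\rho(w)>|p_n-q_i|\ge c_i-2^{-n}$. Combining, $c_i<3\cdot 2^{-n}$, impossible for large $n$ once $c_i>0$; and if $c_i=0$ (i.e. $f_i(p_0)=p_0$) then $D_i=0$, so $m\ge n$ and $\rho(w)\le 2^{-n}$, contradicting $\rho(w)>|p_n-p_0|=2^{-n}$ outright. Either way only finitely many $n$ survive, proving the claim and the theorem.

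The decisive tension is that a large intrinsic distance $\delta(w)$ confines $w$ to a deep, small-radius arc (hence $\rho(w)$ small), whereas the Euclidean contraction insists $\rho(w)$ stay bounded below by roughly $c_i$; these become irreconcilable once the reciprocal scales $2^{-n}$ and $2^n$ are inserted. I expect the main obstacle to be the maps that (nearly) fix $p_0$, i.e. $c_i$ small or zero: for these the Euclidean contraction is almost vacuous near $p_0$, so the argument must lean entirely on the intrinsic estimate, and this is exactly where the precise value $\tilde{d}(p_n,p_0,P)=2^n$ is indispensable. Consequently the technical heart is fact (b)---verifying that the $\varepsilon$-chain metric does not shortcut the long thin arcs $l_n$, so that their length is genuinely visible to $\tilde{d}$; this rests on the arcs being simple and polygonal and on the angular separation of distinct $l_n$, and I would establish it through the compactness estimate bounding each arc's minimal self-distance away from $0$.
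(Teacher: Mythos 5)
Your argument is correct in outline and rests on the same dichotomy as the paper's proof --- maps with $f(p_0)\neq p_0$ are defeated by the strict Euclidean contraction away from $p_0$, maps with $f(p_0)=p_0$ by the fact that the intrinsic distance from $p_0$ to $p_n$ is too large for $p_n$ to lie in a non-expanding image --- but you implement the second half with the chain metric $\tilde{d}$, whereas the paper works with connectedness and one-dimensional measure: any connected subset of $P$ containing $p_n$ and a point off $l_n$ must contain all of $l_n$, hence have length at least $2^n$, while $f(l_i)$ has length at most $2^i$. Your facts (a) and (c) are fine (the triangle inequality for $\tilde{d}$ and its monotonicity under non-expanding maps follow by concatenating and pushing forward chains), and the numerology in the two cases $c_i>0$ and $c_i=0$ checks out, including the strictness needed when $q_i=p_0$.

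The one genuinely incomplete step is fact (b). Your justification (``$\varepsilon$ below the minimal distance between non-adjacent edges of $l_n$'') only rules out shortcuts of a chain \emph{within} $l_n$; it does not address the two real dangers. First, all the arcs $l_m$ accumulate at $p_0$, so for \emph{every} $\varepsilon>0$ a chain may hop from $l_n$ to another arc (and thence toward $p_0$, or back to $l_n$ at a completely different parameter) from any point of $l_n$ of sufficiently small radius; there is no single $\varepsilon$ below which inter-arc jumps are impossible. Second, the definition of $l_n$ permits it to return to arbitrarily small radius at a late arc-length parameter, in which case $\tilde{d}(p_n,p_0,P,\varepsilon)$ really is far smaller than $2^n$ for every fixed positive $\varepsilon$. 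The claim $\tilde{d}(p_n,p_0,P)=2^n$ is nevertheless true, but only because of the limit $\varepsilon\searrow 0$: an inter-arc jump in a mesh-$\varepsilon$ chain can occur only at radius $O_n(\varepsilon)$, and since $l_n$ is a simple arc, points of $l_n$ whose radius tends to $0$ have arc-length parameter tending to $0$; hence the portion of the chain confined to $l_n$ must sweep the parameter from $2^n$ down to $o_\varepsilon(1)$, which (again by simplicity, via uniform continuity of the inverse parametrization and a crossing count over parameter levels away from the finitely many corners) costs at least $2^n-o_\varepsilon(1)$. You should either supply this argument in full or, as the paper does, replace $\tilde{d}$ by the $\mathcal{H}^1$-measure of connected subsets of $P$, for which the lower bound $2^n$ is immediate.
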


\begin{proof}
Suppose that $f:P\ra P$ is a weak contraction. We shall examine how many of the points $p_i$ can belong to $f(P)$.

If $f(p_0)\neq p_0$ then there is a neighbourhood $U$ of $f(p_0)$ such that $d(p_0, U)>0$ and $U$ contains finitely many points $p_i$ and almost all of the sets $f(l_i)$. Note that only finitely many of the sets $f(l_i)$ may reach outside of $U$. Also observe that each $f(l_i)$ covers at most finitely many points $p_i$ because the lengths of $l_i$ are not increased by $f$ (this elementary property of contractions can be proven either by using $\delta$-chains, or, as in \cite{Kw}, by using the fact that $f$ does not increase one-dimensional measure). Consequently, only finitely many of the points $p_i$ belong to $f(P)$.

If, on the other hand, $f(p_0)=p_0$, then, given $n\geq 1$, note that $p_n$ may not belong to $f(l_i)$ for $i<n$, because the lengths of these sets are too small to traverse the whole $l_n$. But no other point in $P$ can be mapped onto $p_n$ by $f$, because $f$ decreases the distance between $p_0$ and any other point. Therefore, the only point $p_i$ present in $f(P)$ is $p_0$.

In conclusion, if $F$ is a weakly contracting IFS, then only finitely many of the points $p_i$ can belong to $F(P)$, and therefore $P$ is not an attractor of $F$.
\end{proof}

\section{acknowledgements}
The second author was supported by the ESF Human Capital Operational Programme grant 6/1/8.2.1/POKL/2009.

\end{document}